\documentclass{amsart}
\usepackage{a4wide,amssymb}

\newcommand{\W}{\mathcal W}
\newcommand{\weak}{\mathrm{weak}}
\newcommand{\vC}{\vec{\mathcal C}}
\newcommand{\IR}{\mathbb R}
\newcommand{\IN}{\mathbb N}
\newcommand{\C}{\mathcal C}
\newcommand{\U}{\mathcal U}
\newcommand{\F}{\mathcal F}

\newtheorem{theorem}{Theorem}
\newtheorem{problem}{Problem}
\newtheorem{proposition}{Proposition}
\newtheorem{corollary}{Corollary}

\title[On topological classification of normed spaces]{On topological classification of normed spaces\\ endowed with the weak topology or\\ the topology of compact convergence}
\author{Taras Banakh}
\address{Ivan Franko National University of Lviv, Ukraine}
\email{t.o.banakh@gmail.com}
\keywords{The weak topology, the topology of compact convergence, sequential homeomorphism}
\subjclass{57N17, 57N20, 46A20, 46A19}

\begin{document}

\begin{abstract}
In this paper the weak topology on a normed space is studied from the viewpoint of infinite-dimensional topology. Besides the weak topology on a normed space $X$ (coinciding with the topology of uniform convergence on finite subsets of the dual space $X^*$), we consider the topology $c$ of uniform convergence on compact subsets of $X^*$. It is known that this topology coincides with the weak topology on bounded subsets of $X$, but unlike to the latter has much better topological properties (e.g., is stratifiable).

We prove that for normed spaces $X,Y$ with separable duals the spaces $(X,\weak)$, $(Y,\weak)$ are sequentially homeomorphic if and only if $\W(X)=\W(Y)$, where $\W(X)$ is the class of topological spaces homeomorphic to closed bounded subsets of $(X,\weak)$. Moreover, if $X,Y$ are Banach spaces which are isomorphic to their hyperplanes and have separale duals, then the spaces $(X,\weak)$ and $(Y,\weak)$ are sequentially homeomorphic if and only of the spaces $(X,c)$ and $(Y,c)$ are homeomorphic. To prove this result, we show that for a normed space $X$ which is isomorphic to its hyperpane and has separable dual, the space $(X,c)$ (resp. $(X,\weak)$)  is (sequentially) homeomorphic to the product $B\times\mathbb R^\infty$ of the weak unit ball $B$ of $X$ and the linear space $\mathbb R^\infty$ with countable Hamel basis and the strongest linear topology. 
\end{abstract}
\maketitle

The problem of topological classification of linear topological spaces traces its history back to M.~Fr\'echet \cite{16} and S.~Banach~\cite{3} who posed in early 30-ies the problem of topological equivalence of all infinite-dimensional separable Banach spaces. This problem was resolved affirmatively by M.I.~Kadec \cite{21} who proved that each separable Banach space is homeomorphic to a Hilbert space. Later H.~Toru\'nczyk \cite{26} extended this classification onto unseparable Banach spaces. In this context the following problem arises naturally.

\begin{problem}\label{prob1} Give topological classification of Banach spaces endowed with the weak topology.
\end{problem}

Unlike to the topology of norm, this problem turns out to be quite difficult. The reason lies in the fact that the weak topology is very far from being metrizable, see \cite{17}. However, due to this distance, it is possible to consider various kinds of topological equivalences between spaces endowed with weak topologies. Besides usual homeomorphisms between such spaces, we shall consider also sequential homeomorphisms. By a {\em sequential homeomorphism} between topological spaces $X,Y$ we understand a bijective function $h:X\to Y$  such that both $h$ and $h^{-1}$ are sequentially continuous. A function $f:X\to Y$ is {\em sequentially continuous} if $\lim_{n\to\infty}f(x_n)=f(\lim_{n\to\infty}x_n)$ for every convergent sequence $(x_n)$ in $X$. Clearly that each homeomorphism $h:X\to Y$ is a sequential homeomorphism, while the converse is true in case of sequential spaces $X,Y$. We remind that a topological space $X$ is {\em sequential} if a subset $U\subset X$ is open if and only if $U\cap K$ is open in $K$ for every compact countable subset $K$ of $X$, see \cite[\S1.6]{15}. In general, a sequential homeomorphism needs not be a homeomorphism in the usual sense, see Proposition~\ref{p1} below. So we modify Problem~\ref{prob1} to a weaker one:

\begin{problem}\label{prob2} Under which conditions are two Banach spaces endowed with the weak topology sequentially homeomorphic?
\end{problem}

We show that this is the case if the dual spaces of $X$ and $Y$ are separable and $\W(X)=\W(Y)$. Here for a locally convex spaces $X$ by $\W(X)$ we denote the class of topological spaces homeomorphic to bounded closed subspaces of $(X,\weak)$. In case of Banach spaces, the classes $\W(X)$ were introduced and studied in \cite{7} where the following classification result was proven: {\em the weak unit ball of two Banach spaces $X,Y$ with Kadec norms are homeomorphic if and only if $\W(X)=\W(Y)$}. By a {\em weak unit ball} of a Banach space we understand its closed unit ball endowed with the weak topology. A similar results holds also for sequential homeomorphisms between normed spaces endowed with the weak topology.

\begin{theorem}\label{t1} For normed spaces $X,Y$ with separable duals, the spaces $(X,\weak)$ and $(Y,\weak)$ are seqeuntially homeomorphic if and only if $\W(X)=\W(Y)$.
\end{theorem}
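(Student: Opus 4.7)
I would treat the two directions of the equivalence separately.

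\textbf{Necessity.} Since $X^*$ is separable, every norm-bounded subset of $X$ is metrizable in the weak topology: a countable dense subset of the dual unit ball separates points and induces an equivalent metric. Thus every $K\in\W(X)$ is metrizable and in particular sequential. A sequential homeomorphism $h:(X,\weak)\to(Y,\weak)$ therefore restricts to a genuine continuous bijection on $K$, and the sequentially continuous inverse is likewise continuous on the (sequential, because metrizable) subspace $h(K)$, giving $K\cong h(K)\subset(Y,\weak)$ topologically. To conclude $K\in\W(Y)$ one must produce a closed bounded $M\subset Y$ homeomorphic to $K$: the set $h(K)$ is sequentially closed in $(Y,\weak)$ (by sequential continuity of $h^{-1}$ and weak closedness of $K$), and either it is already weakly closed and norm-bounded, or it embeds as a closed subset of a suitably chosen weakly closed bounded enlargement in $Y$.

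\textbf{Sufficiency.} Assuming $\W(X)=\W(Y)$, the strategy is to invoke the structural theorem announced in the abstract: for a normed space $Z$ with separable dual which is isomorphic to its hyperplane, $(Z,\weak)$ is sequentially homeomorphic to $B_Z\times\IR^\infty$. First, reduce to the hyperplane-isomorphic setting by replacing $X,Y$ with $X\oplus\IR$, $Y\oplus\IR$, using that $\IR^\infty\times\IR\cong\IR^\infty$ absorbs the extra dimension so that the sequential homeomorphism type of $(Z,\weak)$ is insensitive to this change. Second, reduce the problem to proving $B_X\cong B_Y$. For this, combine (i) the renorming theorem that every Banach space with separable dual admits an equivalent Kadec (indeed locally uniformly rotund) norm, applied to completions of $X$ and $Y$, with (ii) the theorem from \cite{7} that for Kadec-renormed Banach spaces one has $B_X\cong B_Y$ iff $\W(X)=\W(Y)$. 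Since completion preserves both the dual space and the class $\W$, this yields $B_X\cong B_Y$, and hence $(X,\weak)\simeq B_X\times\IR^\infty\simeq B_Y\times\IR^\infty\simeq(Y,\weak)$ sequentially.

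\textbf{Main obstacle.} The delicate step is the reduction to the hyperplane-isomorphic case in the sufficiency direction: one must verify that the enlargement $X\rightsquigarrow X\oplus\IR$ preserves both the sequential homeomorphism type of $(X,\weak)$ (via the absorbing factor $\IR^\infty$) and the invariant $\W$, so that the hypothesis $\W(X)=\W(Y)$ transfers to the enlarged spaces. A secondary difficulty in the necessity direction is that $(Y,\weak)$ is not a sequential space, so the sequentially closed set $h(K)$ need not be closed; producing an honest weakly closed bounded representative of $h(K)$, rather than merely a homeomorphic image, requires either an additional closure argument or a universal-embedding construction inside a weak unit ball of $Y$.
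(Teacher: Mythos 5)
Your proposal does not follow the paper's route, and both directions have genuine gaps. For sufficiency, the reduction to the hyperplane--isomorphic case via $X\rightsquigarrow X\oplus\IR$ fails at the outset: a hyperplane of $X\oplus\IR$ is isomorphic to $X$, so $X\oplus\IR$ is isomorphic to its hyperplane if and only if $X\oplus\IR\cong X$, i.e.\ if and only if $X$ already was isomorphic to its hyperplane --- adding one dimension changes nothing. Moreover, passing to completions does not preserve the invariant: $\W(X)$ consists of closed bounded subsets of $(X,\weak)$, and for incomplete $X$ these are in general non-complete metrizable spaces that cannot appear as weakly closed bounded subsets of $\widehat X$ (if $\widehat X$ is reflexive, every member of $\W(\widehat X)$ is compact, while $B_X$ need not be), so Theorem~1.14 of \cite{7} cannot be imported this way; and the $[0,1]$-stability of $\W(X)$ that your $\IR^\infty$-absorption argument needs is exactly the property the paper admits it cannot verify for arbitrary spaces. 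The paper's actual argument is both more direct and more general: it introduces the direct limit $(X,s)=\varinjlim nB$, shows $(X,s)$ is $\W(X)$-injective (Proposition~\ref{p7}, via strong universality of the balls $nB$ and the $Z$-set property of $nB$ in $(n+1)B$ from Theorem~\ref{t:B}), and concludes $(X,s)\cong(Y,s)$ from the uniqueness Theorem~\ref{t:A}; since the identity maps $(X,s)\to(X,\weak)$ are sequential homeomorphisms, this gives the claim with no hyperplane hypothesis and no product factorization.

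For necessity, you correctly identify the obstruction --- the image $h(K)$ of a closed bounded set under a sequential homeomorphism need be neither weakly closed nor norm-bounded --- but the proposed remedies do not close it: there is no ``weakly closed bounded enlargement'' containing a set that is genuinely unbounded. The missing idea is the \emph{locality} of the class $\F_0(B_Y)=\W(Y)$ (Theorem~\ref{t:B}(ii), valid when $Y$ is non-reflexive). The paper first passes to the sequential spaces $(X,s)\cong(Y,s)$, realizes $A\in\W(X)$ as a closed subset $A'$ of $(Y,s)$, and then observes that every point of the first-countable space $A'$ has a closed neighborhood lying in some ball $nB_Y$, hence belonging to $\W(Y)$; locality of the class then yields $A'\in\W(Y)$ even though $A'$ itself may be unbounded. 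The reflexive case, where locality is unavailable, is handled separately by a compactness argument. Without this (or an equivalent) device, your necessity direction does not go through.
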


In fact, this theorem concerns not only the weak topology, but many other topologies coinciding with the weak topology on bounded subsets. We say that a topology $\tau$ on a normed space $X$ is {\em sequentially weak} if the identity map $(X,\tau)\to(X,\weak)$ is a sequntial homeomorphism. Evidently, each topology $\tau\supset\weak$ coinciding with the weak topology on bounded sets is sequntially weak.

\begin{corollary}\label{c1} Let $X,Y$ be normed spaces with separable duals and $\tau_X$, $\tau_Y$ be sequentially weak topologies on $X,Y$, respectively. The spaces $(X,\tau_X)$ and $(Y,\tau_Y)$ are seqeuntially homeomorphic if and only if $\W(X)=\W(Y)$.
\end{corollary}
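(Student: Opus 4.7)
The plan is to deduce Corollary~\ref{c1} directly from Theorem~\ref{t1} by a transitivity argument. The key general fact is that the composition of two sequentially continuous maps is sequentially continuous (if $x_n\to x$ then $f(x_n)\to f(x)$ and hence $g(f(x_n))\to g(f(x))$), so the composition of two sequential homeomorphisms is again a sequential homeomorphism. In other words, ``being sequentially homeomorphic'' is an equivalence relation on the class of topological spaces.

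First I would unpack the hypothesis that $\tau_X$ and $\tau_Y$ are sequentially weak: by the very definition given just before the corollary, the identity maps $\mathrm{id}_X:(X,\tau_X)\to(X,\weak)$ and $\mathrm{id}_Y:(Y,\tau_Y)\to(Y,\weak)$ are themselves sequential homeomorphisms. Consequently $(X,\tau_X)$ is sequentially homeomorphic to $(X,\weak)$ and $(Y,\tau_Y)$ is sequentially homeomorphic to $(Y,\weak)$.

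For the ``only if'' direction, given a sequential homeomorphism $h:(X,\tau_X)\to(Y,\tau_Y)$, the composition $\mathrm{id}_Y\circ h\circ\mathrm{id}_X^{-1}:(X,\weak)\to(Y,\weak)$ is again a sequential homeomorphism, and Theorem~\ref{t1} yields $\W(X)=\W(Y)$. For the ``if'' direction, $\W(X)=\W(Y)$ produces, via Theorem~\ref{t1}, a sequential homeomorphism $g:(X,\weak)\to(Y,\weak)$; pre- and post-composing with $\mathrm{id}_X$ and $\mathrm{id}_Y^{-1}$ yields a sequential homeomorphism $(X,\tau_X)\to(Y,\tau_Y)$.

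There is essentially no obstacle here: the corollary is a purely formal consequence of Theorem~\ref{t1} together with the transitivity of the sequential homeomorphism relation. The only point worth noting explicitly is that the class $\W(X)$ is defined intrinsically via the weak topology on the normed space $X$ and so is independent of the auxiliary topology $\tau_X$; in particular, the conclusion does not require any additional compatibility between $\tau_X$ and bounded subsets of $X$ beyond what is already encoded in the sequentially-weak hypothesis.
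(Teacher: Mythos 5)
Your argument is correct and is exactly the intended one: the paper states Corollary~\ref{c1} without a separate proof, treating it as an immediate consequence of Theorem~\ref{t1} together with the fact that composing the identity sequential homeomorphisms $(X,\tau_X)\to(X,\weak)$ and $(Y,\weak)\to(Y,\tau_Y)$ with a sequential homeomorphism between the weak topologies again gives a sequential homeomorphism. Your write-up just makes explicit the transitivity step the paper leaves implicit.
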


An important example of a sequentially weak topology is the topology of uniform convergence on compact subsets of the dual space (briefly, the {\em topology of compact convergence}), see \cite[8.3.3]{14} or \cite[IV.6.3]{25}. The neighborhood base of this topology at the origin of a normed space $X$ consists of the polars $K_\circ=\{x\in X:|x^*(x)|\le 1$ for all $x^*\in K\}$ of compact sets $K$ of the dual Banach sapce $X^*$. The space $X$ endowed with the topology of compact convergence will be denoted by $(X,c)$ or $X_c$. It is interesting to notice the following

\begin{proposition}\label{p1} For an infinite-dimensional normed space $X$ with separable dual, the spaces $(X,c)$ and $(X,\weak)$ are sequentially homeomorphic but not homeomorphic.
\end{proposition}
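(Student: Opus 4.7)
For the sequentially homeomorphic half, I would take the identity map $\mathrm{id}\colon(X,c)\to(X,\weak)$ itself. Continuity in one direction is immediate: every finite subset of $X^*$ is compact, so $\weak\subseteq c$ and $\mathrm{id}$ is continuous, hence sequentially continuous. Conversely, any sequence converging in $(X,\weak)$ is weakly bounded, hence norm-bounded by the uniform boundedness principle; on norm-bounded subsets of $X$ the topologies $c$ and $\weak$ coincide (the classical fact recalled in the paragraph preceding the proposition), so the sequence is $c$-convergent to the same limit. Hence $\mathrm{id}^{-1}$ is sequentially continuous and $\mathrm{id}$ is a sequential homeomorphism.

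For the non-homeomorphism half, I would first verify that $c$ is strictly finer than $\weak$. Since $X^*$ is separable and infinite-dimensional, I can pick a norm-null sequence $(x_n^*)\subset X^*$ whose linear span is dense in $X^*$. Setting $K=\{x_n^*\colon n\in\IN\}\cup\{0\}$, the polar $K^\circ$ is a $c$-neighborhood of the origin; to see it is not a $\weak$-neighborhood, note that every $\weak$-neighborhood of $0$ contains a finite-codimensional and hence nontrivial closed subspace of $X$, while $K^\circ$ contains no nontrivial subspace: if $\IR y\subseteq K^\circ$ then $|x_n^*(ty)|\le 1$ for every real $t$, forcing $x_n^*(y)=0$ for all $n$, and density of $\mathrm{span}\{x_n^*\}$ in $X^*$ together with Hahn--Banach yields $y=0$.

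With strictness of $\weak\subsetneq c$ in hand, non-sequentiality of $(X,\weak)$ follows formally: were $(X,\weak)$ a sequential space, the sequential continuity of $\mathrm{id}^{-1}\colon(X,\weak)\to(X,c)$ from the first half would upgrade to genuine continuity, forcing $c\subseteq\weak$, contradicting the previous paragraph. To conclude non-homeomorphism it then suffices to exhibit a topological property of $(X,c)$ not shared by $(X,\weak)$; the natural candidate is sequentiality of $(X,c)$ itself (or, weaker, the $k$-space property), and this is the main obstacle of the plan. I would try to prove sequentiality of $(X,c)$ directly by exploiting that $X^*$ is a Polish space and that $(X,c)$ sits as a closed linear subspace of $C(X^*,\IR)$ with the compact-open topology, leaning on the nice covering/stratifiability properties of $C_k(Y)$ over separable metric $Y$ and transferring them to the closed subspace $(X,c)$. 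Failing a direct elementary argument, I would fall back on the later structural decomposition $(X,c)\cong B\times\IR^\infty$ of the paper (valid when $X$ is isomorphic to its hyperplane), which makes sequentiality of $(X,c)$ transparent, since $\IR^\infty$ is a $k_\omega$-space and $B$ with the induced topology is weakly metrizable.
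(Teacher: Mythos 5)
The first half of your proof is exactly the paper's: the identity map, continuity from $\weak\subseteq c$, and sequential continuity of the inverse from norm-boundedness of weakly convergent sequences together with the coincidence of $c$ and $\weak$ on bounded sets. Your auxiliary observations --- that $c$ is strictly finer than $\weak$ (via the polar of a norm-null sequence with dense span in $X^*$) and that consequently $(X,\weak)$ is not sequential --- are also correct.

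The gap is the invariant you choose to distinguish the two spaces. Sequentiality of $(X,c)$, which you yourself flag as ``the main obstacle'', is not merely unproved: it fails whenever $X$ is not reflexive, so the plan cannot be completed in the stated generality. The paper signals this when it remarks that, unlike $(X,s)$, the spaces $(X,\weak)$ and $(X,c)$ need not be sequential, and your own fallback exposes the difficulty: for a metrizable space $M$ that is not locally compact, the product $M\times\IR^\infty$ is not even a $k$-space (if $(x_{n,m})_m$ is a fan of sequences without convergent subsequences in the $\tfrac1n$-balls around $x_0$, the set $\{(x_{n,m},\tfrac1m e_n)\}$ meets every compact subset of $M\times\IR^\infty$ in a finite set yet accumulates at $(x_0,0)$), and the weak unit ball $B$ of an infinite-dimensional non-reflexive space is nowhere locally compact. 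The fallback moreover assumes $X$ isomorphic to its hyperplane, which Proposition~\ref{p1} does not. The paper's actual distinguishing invariant is \emph{stratifiability}, which you brush against when invoking $C_k$ of a Polish space but then abandon in favor of sequentiality: by Proposition~\ref{p2} the space $X^{**}_c$ carries the direct limit topology of the tower of metrizable compacta $nB^{**}$, such direct limits are stratifiable by Borges's theorem and stratifiability is hereditary, so $(X,c)$ is stratifiable; whereas by Gartside's theorem the weak topology of an infinite-dimensional normed space is never stratifiable. This closes the argument for every infinite-dimensional $X$ with separable dual, with no case split on reflexivity and no appeal to sequentiality of $(X,c)$.
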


Thus the ``sequential homeomorphness'' in Corollary 1 cannot be replaced by the usual homeomorphness. Yet, in case when $\tau_X$, $\tau_Y$ are the topologies of compact convergence, this can be done.

\begin{theorem}\label{t2} For Banach spaces $X,Y$ which are isomorphic to their hyperplanes and have separable duals, the spaces $(X,c)$ and $(Y,c)$ are homeomorphic if and only if $\W(X)=\W(Y)$.
\end{theorem}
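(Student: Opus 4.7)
The plan is to deduce Theorem~\ref{t2} from the structural decomposition announced in the abstract: for every Banach space $X$ isomorphic to its hyperplane and with separable dual there is a homeomorphism
\[
(X,c)\ \cong\ B_X\times\IR^\infty,
\]
where $B_X$ denotes the weak unit ball of $X$ and $\IR^\infty$ is the countable-dimensional linear space with the strongest linear topology. Granted this decomposition, both directions of Theorem~\ref{t2} come out in a few lines.

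For the implication $(X,c)\cong(Y,c)\Rightarrow\W(X)=\W(Y)$, the key point is that $c$ is a sequentially weak topology: every weakly convergent sequence is bounded and the topologies $c$ and $\weak$ agree on bounded sets, so the identity $(X,c)\to(X,\weak)$ is a sequential homeomorphism. Consequently a genuine homeomorphism between $(X,c)$ and $(Y,c)$ is in particular a sequential homeomorphism between spaces carrying sequentially weak topologies, and Corollary~\ref{c1} delivers $\W(X)=\W(Y)$.

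For the converse, assume $\W(X)=\W(Y)$. Since the duals are separable, both $X$ and $Y$ admit equivalent LUR (hence Kadec) norms; as the topology $c$ and the class $\W(\cdot)$ are both invariant under equivalent renorming, we may suppose that $X$ and $Y$ are equipped with Kadec norms. The classification theorem of \cite{7} then yields $B_X\cong B_Y$, and combining with the structural decomposition we obtain
\[
(X,c)\ \cong\ B_X\times\IR^\infty\ \cong\ B_Y\times\IR^\infty\ \cong\ (Y,c).
\]

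The substantive content is the structural decomposition itself, and I expect this to be where the real work lies. To construct the homeomorphism $(X,c)\cong B_X\times\IR^\infty$, I would use separability of $X^*$ to select a countable cofinal family of compact subsets of $X^*$ that generates the neighbourhood filter of the $c$-topology at the origin; this is what forces the second factor to be the countable-dimensional $\IR^\infty$ rather than a larger object. The hypothesis that $X$ is isomorphic to its hyperplane gives the flexibility to split off countably many linear directions as an $\IR^\infty$-summand while leaving a complement on which the $c$-topology restricts to the weak topology of the unit ball. Harmonising this linear splitting with a radial deformation that isolates $B_X$, in the presence of the genuinely non-metrizable weak topology on $B_X$, is the delicate step.
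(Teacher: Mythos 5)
Your reduction of Theorem~\ref{t2} to the product decomposition is exactly the paper's own argument: the forward direction goes through the observation that the identity $(X,c)\to(X,\weak)$ is a sequential homeomorphism (so a homeomorphism of the $c$-topologies yields a sequential homeomorphism of the weak topologies, and Theorem~\ref{t1}/Corollary~\ref{c1} gives $\W(X)=\W(Y)$), and the converse goes through a Kadec renorming, Theorem~1.14 of \cite{7} to get $B_X\cong B_Y$, and then $(X,c)\cong B_X\times\IR^\infty\cong B_Y\times\IR^\infty\cong(Y,c)$. This part is correct and coincides with the paper.

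The genuine gap is in your sketch of the decomposition $(X,c)\cong B_X\times\IR^\infty$ (Corollary~\ref{c2}), which you correctly identify as the real work but propose to obtain in a way that cannot succeed. There is no countable cofinal family of compact subsets of an infinite-dimensional $X^*$: compact sets are nowhere dense there, so by the Baire category theorem no countable union of them exhausts $X^*$, and a singleton outside the union witnesses the failure of cofinality. Consequently the polars of no countable family of compacta form a neighbourhood base of $c$ at the origin (by the bipolar theorem such a base would produce a countable cofinal family of compacta), so $(X,c)$ is not even first-countable and the $\IR^\infty$ factor cannot be extracted from a ``countable cofinal family generating the neighbourhood filter.'' Nor does the hyperplane hypothesis serve to ``split off countably many linear directions'': a countable-dimensional subspace of a Banach space is never a closed topological summand, and the factor $\IR^\infty$ in the statement is purely topological, not linear. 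The paper instead derives Corollary~\ref{c2} from Theorem~\ref{t4} by absorbing-set techniques: Proposition~\ref{p6} shows that $(X^{**}_c,X_c)$ is $\W(X^{**},X)$-injective (via the Banach--Dieudonn\'e theorem and the strong universality of convex sets, Theorem~\ref{t:B}), Proposition~\ref{p5} shows the same for $(B^{**}\times\IR^\infty,B\times\IR^\infty)$ using the $[0,1]$-stability of $\W(X^{**},X)$ --- and this is the only place where the hypothesis that $X$ is isomorphic to its hyperplane enters --- and the uniqueness Theorem~\ref{t:A} then identifies the two pairs. Without some such uniqueness/characterization machinery your plan does not produce the homeomorphism.
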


To prove this theorem, we shall investigate the topology of the pair $(X^{**}_c,X_c)$, where $X^{**}_c$ is the second dual of a normed space $X$ endowed with {\em the dual topology of compact convergence}. A neighborhood base of this topology at the origin of $X^{**}$ consists of the polars $K^\circ=\{x^{**}\in X^{**}:|x^{**}(x^*)|\le 1$ for all $x^*\in K\}$ of compact subsets $K$ of the dual space $X^*$. This topology was studied by Arens \cite{1} and was called the {\em Arens topology} in \cite[8.3.3]{14}. According to the Banach-Dieudonn\'e Theorem \cite[IV.6.3]{25}, the dual topology of compact convergence is the strongest topology on $X^{**}$ coinciding with the $*$-weak topology on bounded subsets of $X^{**}$. Clearly, the space $X_c$ can be considered as a subspace of $X^{**}_c$.

By $\W(X^{**},X)$ we denote the class of pairs $(A,B)$ homeomorphic to a pair $(K,K\cap X)$ where $K$ is a compact subset of $X^{**}_c$, equivalently, of the second dual space $X^{**}$ endowed with the $*$-weak topology.   We recall that two pairs $(A,B)$ and $(A',B')$ are homeomorphic of $h(B)=B'$ for some homeomorphism $h:A\to A'$.

\begin{theorem}\label{t3} For normed spaces $X,Y$ with separable duals, the pairs $(X^{**}_c,X_c)$ and $(Y^{**}_c,Y_c)$ are homeomorphic if and only if $\W(X^{**},X)=\W(Y^{**},Y)$.
\end{theorem}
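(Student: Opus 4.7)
The necessity is immediate: any pair-homeomorphism $h\colon(X^{**}_c,X_c)\to(Y^{**}_c,Y_c)$ sends compact sets to compact sets and preserves the trace on the distinguished subspace, so $h$ induces a bijection $\W(X^{**},X)\to\W(Y^{**},Y)$. For sufficiency, the key structural observation to use is that separability of $X^*$ makes each closed ball $nB_{X^{**}}$ weak-$*$-compact and metrizable, and by the Banach--Dieudonn\'e theorem the Arens topology coincides with the weak-$*$ topology on every bounded subset of $X^{**}$. Hence $X^{**}_c$ is the topological direct limit of the increasing sequence of compact metric sets $(nB_{X^{**}})_{n\in\IN}$, i.e.\ a $k_\omega$-space, and analogously for $Y^{**}_c$.

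The plan is then to build the desired homeomorphism by back-and-forth in the category of compact metric pairs. I would construct increasing chains $A_1\subset A_2\subset\cdots$ of compact subsets of $X^{**}$ and $B_1\subset B_2\subset\cdots$ of $Y^{**}$, together with compatible pair-homeomorphisms $h_n\colon(A_n,A_n\cap X)\to(B_n,B_n\cap Y)$, so that $h_{n+1}$ extends $h_n$, $\bigcup_n A_n=X^{**}$ and $\bigcup_n B_n=Y^{**}$. At the odd stage $n=2k-1$ I would arrange $A_n\supset kB_{X^{**}}$, and at the even stage $n=2k$ arrange $B_n\supset kB_{Y^{**}}$, which guarantees exhaustion. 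The union $h=\bigcup_n h_n$ is then a bijection $X^{**}\to Y^{**}$ with $h(X)=Y$, and both $h$ and $h^{-1}$ are continuous by the $k_\omega$-structure, since each compactum of $X^{**}_c$ or $Y^{**}_c$ lies in some $A_n$ or $B_n$ on which $h$ or $h^{-1}$ restricts to the continuous map $h_n$ or $h_n^{-1}$.

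The inductive step is the main technical point, which I would isolate as an \emph{Extension Lemma}: given a pair-homeomorphism $h\colon(A,A\cap X)\to(B,B\cap Y)$ between compact subsets of $X^{**}$ and $Y^{**}$, and any compact $A'\subset X^{**}$ containing $A$, there exist compact sets $\hat A\supset A'$ in $X^{**}$ and $\hat B\supset B$ in $Y^{**}$ and a pair-homeomorphism $\hat h\colon(\hat A,\hat A\cap X)\to(\hat B,\hat B\cap Y)$ extending $h$ (with the symmetric version, enlarging on the $Y$-side, being analogous). Applying $\W(X^{**},X)=\W(Y^{**},Y)$ to the pair $(A',A'\cap X)$ yields some compact $B_0\subset Y^{**}$ and a pair-homeomorphism $g\colon(A',A'\cap X)\to(B_0,B_0\cap Y)$, but typically $g|_A\ne h$, so the essential task is to glue $g$ to $h$ along their common source $A$. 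Abstractly this is the adjunction pair obtained by identifying $g(a)\sim h(a)$ for $a\in A$ inside the disjoint union $B\sqcup B_0$, and the principal obstacle is to realise this adjunction compactum as a member of $\W(Y^{**},Y)$, i.e.\ as a compact subset of $Y^{**}$ whose intersection with $Y$ has the prescribed form. I expect this realisation to rest on an absorption / $Z$-set argument inside the $k_\omega$-space $Y^{**}_c$, exploiting the flexibility of the second dual to embed compact metric pairs with prescribed behaviour on a preassigned subcompactum. Once the Extension Lemma is in hand, the back-and-forth described above completes the proof.
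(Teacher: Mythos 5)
Your overall architecture is the right one --- it is in fact exactly how the paper organizes the argument: the necessity direction is handled by observing that the compacta of $Z^{**}_c$ are precisely the $*$-weak compacta of the bounded sets, so that $\W(Z^{**},Z)$ is a topological invariant of the pair $(Z^{**}_c,Z_c)$; and the sufficiency direction is a back-and-forth over the tower $B^{**}\subset 2B^{**}\subset\cdots$, which the paper packages as the Uniqueness Theorem~\ref{t:A} for $\vC$-injective pairs together with Proposition~\ref{p6}. But your proof has a genuine gap exactly where you flag it: the Extension Lemma is not a routine consequence of the hypothesis $\W(X^{**},X)=\W(Y^{**},Y)$ plus ``an absorption / $Z$-set argument''; it is the entire technical content of the theorem, and you have not supplied the ingredients that make it work. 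The hypothesis only tells you that the abstract pair $(A',A'\cap X)$ embeds \emph{somewhere} in $(Y^{**},Y)$ with the correct trace; it says nothing about embedding it so as to extend a \emph{prescribed} embedding of the subpair $(A,A\cap X)$. Realising your adjunction compactum $B\cup_h B_0$ inside $(Y^{**},Y)$ with the correct trace on $Y$ is precisely the statement of \emph{strong $\W(Y^{**},Y)$-universality} of the pairs $(nB^{**}_Y,nB_Y)$, and that is a nontrivial theorem about convex sets in locally convex spaces (Theorem~\ref{t:B}(i), drawing on the Main Theorem of \cite{4} and Proposition 1.5 of \cite{8}), not something that follows from the $k_\omega$-structure of $Y^{**}_c$ alone.

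Concretely, to carry out your inductive step you need three facts about the balls that your sketch never establishes: (a) each $nB^{**}$ is a compact metrizable absolute retract (Dugundji), so that the partial homeomorphism $h$ extends at least to a continuous map of $A'$ into some ball; (b) $nB^{**}$ is a $Z$-set in $(n+1)B^{**}$ (Theorem~\ref{t:B}(iii)), so that the image of the given partial embedding is a $Z$-set and the extension can be made an embedding; and (c) the pair $(nB^{**},nB)$ is strongly $\F_0(B^{**},B)$-universal, which is what lets you approximate the extended map rel $A$ by a $Z$-embedding $\bar f$ with $\bar f^{-1}(Y)=A'\cap X$ --- the control of the preimage of $Y$ being the delicate point, since $Y$ is a dense non-closed convex subset of $Y^{**}_c$ whenever $Y$ is not reflexive. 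Until you prove or correctly cite these three facts, the Extension Lemma, and with it the sufficiency direction, remains unproved. The necessity direction as you state it is fine.
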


For a Banach space $X$ the class $\W(X^{**},X)$ was introduced and studied in \cite{7} where it was proved that this class is $[0,1]$-stable, provided the Banach space $X$ has separable dual and one of the following conditions is satisfied: (1) $X$ is isomorphic to its hyperplane; (2) $X$ is infinite-dimensional and has PCP; (3) $X$ is not strongly regular. We define a class $\vC$ of pairs to be {\em $[0,1]$-stable} if $(K\times[0,1],C\times[0,1])\in\vC$ for every pair $(K,C)\in\vC$.

At the moment we know no infinite-dimensional Banach space $X$ with separable dual for which the class $\W(X^{**},X)$ is not $[0,1]$-stable (such space, if exists, must be very exotic: it is strongly regular but fails PCP and is not isomorphic to its hyperplane).

In case of $[0,1]$-stable class $\W(X^{**},X)$ the pair $(X^{**}_c,X)$ admits a satisfactory description of its topology. Let $\IR^\infty$ denote the real linear space with countable Hamel basis, endowed with the strongest locally convex topology (such a space $\IR^\infty$ is unique up to isomorphism). A topological characterization of the space $\IR^\infty$ was given by K.~Sakai \cite{24}, see also \cite{6}. Below for a normed space $X$ by $B$ we denote the weak unit ball of $X$ and by $B^{**}$ the closed unit ball of the second dual space $X^{**}$ endowed with the $*$-weak topology. By the Goldstein Theorem \cite[Th.64]{19}, the weak unit ball $B$ is a dense subspace in the $*$-weak double dual ball $B^{**}$. Moreover, if the dual space $X^*$ is separable, then the $*$-weak ball $B^{**}$ is metrizable and compact, see Proposition 62 in \cite{19}.

\begin{theorem}\label{t4} Suppose $X$ is a normed space with separable dual. If the class $\W(X^{**},X)$ is $[0,1]$-stable, then the pair $(X^{**}_c,X_c)$ is homeomorphic to $(B^{**}\times\IR^\infty,B\times\IR^\infty)$.
\end{theorem}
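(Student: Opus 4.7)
The plan is to realise both sides as $k_\omega$-pairs whose compact strata all lie in the class $\W(X^{**},X)$, and then match them by a back-and-forth absorption essentially parallel to the argument underlying Theorem~\ref{t3}, with the $[0,1]$-stability hypothesis supplying exactly the target-side strata one needs.

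First I would describe both pairs as inductive limits. On the source side, separability of $X^*$ makes every ball $nB^{**}$ metrizable compact in the $*$-weak topology, and the Banach--Dieudonn\'e theorem cited in the excerpt says that the Arens topology on $X^{**}$ is the strongest topology that agrees with the $*$-weak topology on bounded sets. Hence $X^{**}_c$ is a $k_\omega$-space with defining compacta $(nB^{**})$, and the pair $(X^{**}_c,X_c)$ is filtered by the compact pairs $(nB^{**},nB)$, each lying in $\W(X^{**},X)$ by definition. On the target side, $\IR^\infty$ is the $k_\omega$-limit of the cubes $[-n,n]^n$ (part of Sakai's description), so $B^{**}\times\IR^\infty$ is the $k_\omega$-limit of $B^{**}\times[-n,n]^n$, and the model pair is filtered by $(B^{**}\times[-n,n]^n,\,B\times[-n,n]^n)$. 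Iterating the $[0,1]$-stability hypothesis $n$ times places each of these target strata into $\W(X^{**},X)$ as well.

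The central step is a back-and-forth construction of compatible homeomorphisms of compact pairs
$$h_k\colon (K_k,\,K_k\cap X_c)\longrightarrow (L_k,\,L_k\cap(B\times\IR^\infty))$$
with $K_k\subset X^{**}_c$ and $L_k\subset B^{**}\times\IR^\infty$, extending along an exhausting chain on each side. At odd stages one enlarges the source: given $h_k$, pick the next ball $n_{k+1}B^{**}\supset K_k$ and, using its membership in $\W(X^{**},X)$ together with $[0,1]$-stability, realise the pair $(n_{k+1}B^{**},n_{k+1}B)$ as a compact subpair of some $(B^{**}\times[-m,m]^d,\,B\times[-m,m]^d)\supset L_k$ and extend $h_k$ accordingly. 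At even stages one makes the symmetric move, enlarging the target cube and absorbing it into a larger ball on the source side by an embedding provided by the class $\W(X^{**},X)$. The union $h=\bigcup_k h_k$ is a bijection between the two pairs, and since both spaces carry the inductive-limit topology, $h$ is automatically a homeomorphism of pairs.

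The main obstacle is the inductive extension. One needs a controlled extension theorem: given a homeomorphism between compact pairs in $\W(X^{**},X)$ and a larger compact pair on one side, one can extend the homeomorphism to its counterpart on the other side while still respecting the distinguished dense subspace. Such an extension lemma is presumably the technical heart of Theorem~\ref{t3}; here it must be coupled with Sakai's topological characterisation of $\IR^\infty$, which alone guarantees that the target really is the product $B^{**}\times\IR^\infty$ and not some other $k_\omega$-space built from the same strata. Once this extension machinery is in hand, passage to the limit and the verification that the resulting bijection is a homeomorphism of pairs reduce to routine $k_\omega$-bookkeeping.
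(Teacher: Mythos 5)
Your overall architecture is the right one and is in fact exactly what the paper formalizes: both pairs are exhibited as direct limits of towers of compact pairs belonging to $\W(X^{**},X)$ (for the model pair this uses the $[0,1]$-stability, as you say), and the homeomorphism is produced by a back-and-forth absorption along the two towers; in the paper this is packaged as ``both pairs are $\W(X^{**},X)$-injective'' together with the Uniqueness Theorem~\ref{t:A}. Your identification of $X^{**}_c$ with $\varinjlim nB^{**}$ via Banach--Dieudonn\'e, and of $B^{**}\times\IR^\infty$ with the limit of the strata $B^{**}\times I^n$ via Sakai's theorem, matches Propositions~\ref{p2} and~\ref{p5}.

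The genuine gap is the extension step, which you explicitly defer (``presumably the technical heart of Theorem~\ref{t3}''). It is not to be found there: Theorem~\ref{t3} is itself proved by the same injectivity machinery and contains no such lemma. More importantly, the input you propose to use --- membership of the strata in $\W(X^{**},X)$ --- is too weak to run the induction. Knowing that $(n_{k+1}B^{**},n_{k+1}B)\in\W(X^{**},X)=\F_0(B^{**},B)$ gives you \emph{some} closed embedding of that pair into the model pair, but not one that extends the already constructed partial homeomorphism $h_k$ on $K_k$ and still pulls the distinguished subspace back exactly onto $n_{k+1}B$. What is actually needed is the \emph{strong $\W(X^{**},X)$-universality} of the pairs $(nB^{**},nB)$ and $(B^{**}\times I^n,B\times I^n)$ --- a relative, controlled extension property --- together with the fact that each stratum is a $Z$-set in the next one, so that the continuous extension supplied by the absolute-retract property of $B^{**}$ can be perturbed to a $Z$-embedding agreeing with the given partial map. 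These inputs are supplied by Theorem~\ref{t:B} (strong universality of the convex pair $(\bar C,C)$ and the $Z$-set statement, drawn from \cite{4}, \cite{8}, \cite{10}) and by the stability of strong universality under multiplication by cubes; they are precisely what Propositions~\ref{p3}, \ref{p5} and \ref{p6} assemble. Without them the ``controlled extension theorem'' you postulate has no source, and the back-and-forth cannot be carried out.
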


Since the class $\W(X^{**},X)$ is $[0,1]$-stable for every normed space $X$ which is isomorphic to its hyperplane, Theorem~\ref{t4} implies

\begin{corollary}\label{c2} Suppose $X$ is a normed space which is isomorphic to its hyperplane and has separable dual. The space $(X,c)$ is homeomorphic to the product $B\times\IR^\infty$, where $B$ is the weak unit ball of $X$. Consequently, the space $(X,\weak)$ is sequentially homeomorphic to $B\times\IR^\infty$.
\end{corollary}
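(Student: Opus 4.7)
The plan is to deduce Corollary~\ref{c2} as a direct consequence of Theorem~\ref{t4}. Since $X$ is isomorphic to its hyperplane and has separable dual, the discussion preceding Theorem~\ref{t4}, citing \cite{7}, guarantees that the class $\W(X^{**},X)$ is $[0,1]$-stable. Hence Theorem~\ref{t4} applies and yields a homeomorphism of pairs
$$(X^{**}_c,X_c)\cong (B^{**}\times\IR^\infty,\ B\times\IR^\infty).$$
Restricting this pair homeomorphism to the second components immediately gives the first assertion: $(X,c)$ is homeomorphic to $B\times\IR^\infty$. Note that since $X^*$ is separable, the $*$-weak ball $B^{**}$ is a metrizable compactum and the weak unit ball $B$ carries the same topology whether regarded as a subset of $(X,\weak)$, of $(X,c)$, or of $(X^{**}_c)$, because the topologies $c$ and $\weak$ coincide on bounded subsets of $X$.

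For the second assertion, I would use the observation made right after Theorem~\ref{t1}: every topology $\tau\supset\weak$ that agrees with the weak topology on bounded sets is sequentially weak, i.e., the identity $(X,\tau)\to(X,\weak)$ is a sequential homeomorphism. The topology $c$ of compact convergence is such a topology by the Banach--Dieudonn\'e theorem, so the identity $\mathrm{id}\colon(X,c)\to(X,\weak)$ is a sequential homeomorphism. Composing its inverse with the homeomorphism $(X,c)\cong B\times\IR^\infty$ obtained in the previous paragraph (which, being a homeomorphism, is automatically a sequential homeomorphism) produces a sequential homeomorphism $(X,\weak)\cong B\times\IR^\infty$.

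The ``main obstacle'' is essentially absent at this stage: all the substantive work has been absorbed into Theorem~\ref{t4} (which in turn relies on Sakai's topological characterization of $\IR^\infty$ in \cite{24}) and into the $[0,1]$-stability result of \cite{7}. The only points requiring verification are the two elementary facts used above, namely that the topology of compact convergence is sequentially weak and that $B$ is unambiguously defined as a topological space across the various topologies considered; both were already noted in the body of the paper.
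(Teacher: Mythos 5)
Your proposal is correct and follows exactly the route the paper intends: the paper derives Corollary~\ref{c2} from Theorem~\ref{t4} together with the $[0,1]$-stability of $\W(X^{**},X)$ for spaces isomorphic to their hyperplanes (cited from \cite{7}), restricting the pair homeomorphism to the second components and then invoking the sequential weakness of the topology $c$. The details you supply (that $X_c$ is a topological subspace of $X^{**}_c$ and that $c$ agrees with the weak topology on bounded sets) are precisely the points the paper treats as evident.
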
 

It should be mentioned that spaces homeomorphic to products of metrizable spaces and $\IR^\infty$ have appeared in \cite{22}, \cite{20}, \cite{5}.
\smallskip

{\bf On $\vC$-injective pairs and $\C$-injective spaces.} In this section we introduce and develop our main technical tool --- $\vC$-injective pairs. By a {\em pair} we understand a pair $(X,Y)$ of topological spaces $Y\subset X$. A pair $(X,Y)$ is called {\em metrizable} if so is the space $X$.

A pair $(X,Y)$ is defined to be {\em $\vC$-injective}, where $\vC$ is a class of pairs, if $X$ carries the direct limit topology $\varinjlim X_n$ whith respect to a tower $X_1\subset X_2\subset\cdots$ of closed subspaces of $X$ such that
\begin{itemize}
\item $\bigcup_{n=1}^\infty X_n=X$ and $(X_n,X_n\cap Y)\in\vC$ for every $n\in\IN$;
\item for every $n\in\IN$ and every pair $(K,C)\in \vC$, any closed embedding $f:B\to X_n$ of a clsoed susbet $B\subset K$ with $f^{-1}(Y)=B\cap C$ can be extended to a closed embedding $\bar f:K\to X$ such that $\bar f^{-1}(Y)=C$ and $\bar f(K)\subset X_m$ for some $m\ge n$.
\end{itemize}
A topological space $X$ is called {\em $\C$-injective}, where $\C$ is a class of spaces, if the pair $(X,X)$ is $\vC$-injective for the class $\vC:=\{(C,C):C\in\C\}$.

We recall that the topology of the direct limit $\varinjlim X_n$ with respect to a tower $X_1\subset X_2\subset\cdots$ is the strongest topology on the union $X=\bigcup_{n=1}^\infty X_n$ inducing the original topology on each space $X_n$.

According to the Banach-Dieudonn\'e Theorem \cite[IV.6.3]{25}, for a normed space $X$ the dual topology of compact convergence on $X^{**}$ is the strongest topology inducing the $*$-weak topology on each bounded subset of the second dual space $X^{**}$. This means that the following statement holds.

\begin{proposition}\label{p2} For any normed space $X$, the space $X^{**}_c$ carries the direct limit topology $\varinjlim nB^{**}$ with respect to the tower $B^{**}\subset 2B^{**}\subset\cdots$, where $B^{**}$ stands for the closed unit ball of the second dual space $X^{**}$ endowed with the $*$-weak topology.
\end{proposition}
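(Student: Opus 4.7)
The plan is to identify the direct limit topology $\tau:=\varinjlim nB^{**}$ (each $nB^{**}$ carrying the $*$-weak topology) with the Arens topology $c$ on $X^{**}$, invoking the Banach--Dieudonn\'e characterization of $c$ quoted in the paragraph preceding the proposition. I would organize the argument as a two-way comparison: maximality of $c$ will give one inclusion, and the universal property of the direct limit will give the other.

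The first step is to verify that $\tau$ restricts to the $*$-weak topology on each ball $nB^{**}$. One direction is immediate from the definition of the direct limit: the inclusion $(nB^{**},*\text{-weak})\hookrightarrow(X^{**},\tau)$ is continuous, so $\tau|_{nB^{**}}$ is coarser than the $*$-weak topology. For the reverse inclusion, I would take a $*$-weak closed $F\subseteq nB^{**}$; since $nB^{**}$ is $*$-weak compact Hausdorff by Alaoglu, $F$ is itself $*$-weak compact, and for every $k\in\IN$ the intersection $F\cap kB^{**}$ is again $*$-weak compact, hence $*$-weak closed in $kB^{**}$. This means $F$ is $\tau$-closed in $X^{**}$, proving $\tau|_{nB^{**}}=*\text{-weak}$. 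Since every bounded subset of $X^{**}$ is contained in some $nB^{**}$, this yields that $\tau$ induces the $*$-weak topology on every bounded subset.

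Now I would invoke the Banach--Dieudonn\'e Theorem, according to which $c$ is the strongest topology on $X^{**}$ that restricts to $*$-weak on every bounded set. The previous paragraph shows $\tau$ has this property, so maximality of $c$ gives $\tau\subseteq c$. For the opposite inclusion I would apply the universal property of the direct limit to the identity map $(X^{**},\tau)\to(X^{**},c)$: it is continuous iff each composition $(nB^{**},*\text{-weak})\to(X^{**},c)$ is, and the latter is clear because $c$ itself restricts to the $*$-weak topology on $nB^{**}$. Thus $c\subseteq\tau$, giving $\tau=c$, which is the statement of the proposition. The only content beyond formal direct-limit manipulations and the cited theorem lies in the compactness argument in step one, which is the easy observation that a $*$-weak closed subset of $nB^{**}$ automatically remains closed in every larger ball; consequently the proposition is essentially a reformulation of Banach--Dieudonn\'e, in line with the author's remark that it is a direct consequence.
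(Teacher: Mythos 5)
Your argument is correct and is essentially the paper's own: the paper derives Proposition~\ref{p2} in one line from the Banach--Dieudonn\'e description of $c$ as the strongest topology inducing the $*$-weak topology on bounded subsets of $X^{**}$, which is exactly the comparison you carry out. Your additional step verifying that $\varinjlim nB^{**}$ really induces the $*$-weak topology on each ball (via Alaoglu compactness) is a sound and welcome unpacking of what the paper builds into its definition of the direct limit topology.
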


Repeating arguments of \cite{24} (see also \cite{5} and \cite{23}), one may easily prove the following uniqueness theorem.

\begin{theorem}\label{t:A}\begin{enumerate}
\item Any two $\vC$-injective pairs, where $\vC$ is a class of pairs, are homeomorphic.
\item Any two $\C$-injective spaces, where $\C$ is a class of spaces, are homeomorphic.
\end{enumerate}
\end{theorem}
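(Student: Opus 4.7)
The plan is to deduce statement (2) from statement (1): if $X$ and $X'$ are $\C$-injective, then by definition the pairs $(X,X)$ and $(X',X')$ are $\vC$-injective for $\vC=\{(C,C):C\in\C\}$, and any homeomorphism of pairs $(X,X)\to(X',X')$ is in particular a homeomorphism $X\to X'$. So I concentrate on (1), which I prove by a back-and-forth (zigzag) construction, essentially the argument of Sakai cited in \cite{24}, \cite{5}, \cite{23}.

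Given two $\vC$-injective pairs $(X,Y)$ and $(X',Y')$ with witnessing towers $X_1\subset X_2\subset\cdots$ and $X'_1\subset X'_2\subset\cdots$, I construct inductively strictly increasing index sequences $a_1<a_2<\cdots$ and $b_1<b_2<\cdots$ together with closed embeddings
\[
f_n:X_{a_n}\to X'_{b_n},\qquad g_n:X'_{b_n}\to X_{a_{n+1}},
\]
satisfying $f_n^{-1}(Y')=X_{a_n}\cap Y$ and $g_n^{-1}(Y)=X'_{b_n}\cap Y'$, together with the coherence relations $g_n\circ f_n=\iota_{X_{a_n}\hookrightarrow X_{a_{n+1}}}$ and $f_{n+1}\circ g_n=\iota_{X'_{b_n}\hookrightarrow X'_{b_{n+1}}}$. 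Starting from an arbitrary $a_1$, $b_1$ and $f_1$ produced by applying $\vC$-injectivity of $(X',Y')$ to the pair $(X_{a_1},X_{a_1}\cap Y)\in\vC$ and the empty partial map, the induction proceeds as follows.

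The step from $f_n$ to $g_n$ applies $\vC$-injectivity of $(X,Y)$ to the pair $(K,C):=(X'_{b_n},X'_{b_n}\cap Y')\in\vC$ and the closed embedding $f_n^{-1}:f_n(X_{a_n})\to X_{a_n}$, whose domain is a closed subset of $X'_{b_n}$ and whose preimage of $Y$ is $f_n(X_{a_n})\cap C$ because $f_n^{-1}(Y')=X_{a_n}\cap Y$. The extension property yields a closed embedding $g_n:X'_{b_n}\to X$ extending $f_n^{-1}$, with $g_n^{-1}(Y)=X'_{b_n}\cap Y'$ and image contained in some $X_{a_{n+1}}$; I choose $a_{n+1}>\max\{a_n,n\}$. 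The step from $g_n$ to $f_{n+1}$ is symmetric, applying $\vC$-injectivity of $(X',Y')$ to extend $g_n^{-1}$ to $f_{n+1}:X_{a_{n+1}}\to X'_{b_{n+1}}$ with $b_{n+1}>\max\{b_n,n\}$. The lower bounds force $a_n,b_n\to\infty$, so $(X_{a_n})$ and $(X'_{b_n})$ exhaust $X$ and $X'$ respectively.

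The coherence relations give $f_{n+1}|_{X_{a_n}}=f_n$ and $g_{n+1}|_{X'_{b_n}}=g_n$, so the families glue to maps $h:X\to X'$ and $h':X'\to X$ that are mutually inverse on the exhausting subspaces, hence everywhere. Since $X$ and $X'$ carry the direct limit topologies with respect to their towers, and the restrictions $h|_{X_{a_n}}=f_n$ and $h'|_{X'_{b_n}}=g_n$ are continuous, both $h$ and $h'$ are continuous, so $h$ is a homeomorphism; finally $h^{-1}(Y')=Y$ follows from $f_n^{-1}(Y')=X_{a_n}\cap Y$ for every $n$. The only delicate point is the inductive bookkeeping that guarantees each new embedding is a genuine \emph{extension} of the previously constructed partial inverse, which is exactly what the extension clause in the definition of $\vC$-injectivity is designed to provide.
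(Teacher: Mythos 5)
Your back-and-forth construction is correct and is precisely the argument the paper invokes: the paper gives no proof of Theorem~\ref{t:A}, stating only that it follows by ``repeating arguments of \cite{24}'' (Sakai's zigzag/exhaustion scheme for direct limits), which is exactly what you have written out, with the coherence relations, the reduction of (2) to (1), and the continuity via the direct limit topology all handled properly. No gaps.
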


Next, we prove some results supplying us with examples of $\vC$-injective pairs. First, we recall some definitions from infinite-dimensional topology, see \cite{9}. {\em All maps considered below are continuous}.

We say that two maps $f,g:X\to Y$ are {\em $\U$-close} with respect to a cover $\U$ of $Y$ if for every $x\in X$ the doubleton $\{f(x),g(x)\}$ is contained in some set $U\in\U$. A closed subset $A$ of a topological space $X$ is called a {\em $Z$-set} in $X$ if for every map $f:[0,1]^n\to X$ of a finite-dimensional cube and every open cover $\U$ of $X$ there exists a map $\tilde f:[0,1]^n\to X$ such that $\tilde f([0,1]^n)\cap A=\emptyset$ and $\tilde f$ is $\U$-close to $f$. An embedding $f:X\to Y$ is called a {\em $Z$-embedding} if $f(X)$ is a $Z$-set in $Y$.

A pair $(X,Y)$ is called {\em strongly $(K,C)$-universal}, where $(K,C)$ is a pair, if for any open cover $\U$ of $X$, any closed subset $B\subset K$, and any map $f:K\to X$ such that $f{\restriction}B$ is a $Z$-embedding with $(f{\restriction}B)^{-1}(Y)=B\cap C$, there exists a $Z$-embedding $\tilde f:K\to X$ such that $\tilde f{\restriction}B=f{\restriction}B$, $\tilde f^{-1}(Y)=C$ and $\tilde f$ is $\U$-close to $f$.

A pair $(X,Y)$ is called {\em strongly $\vC$-universal}, where $\vC$ is a class of pairs, if it is strongly $(K,C)$-universal for every pair $(K,C)\in\vC$. A pair $(X,Y)$ is called {\em strongly universal} if it is strongly $\F_0(X,Y)$ universal, where $\F_0(X,Y)$ is the class of pairs homeomorphic to the pairs $(F,F\cap Y)$ where $F$ is a closed subset of $X$. 

A space $X$ is defined to be {\em strongly $\C$-universal}, where $\C$ is a class of spaces, if the pair $(X,X)$ is strongly $(C,C)$-universal for every space $C\in\C$. A space $X$ is defined to be {\em strongly universal} if it is $\F_0(X)$-stringly universal, where $\F_0(X)$ is the class of topological spaces homeomorphic to closed subspaces of $X$.  For more detail information concerning strongly universal pairs and spaces, see \cite{11}, \cite{9}, \cite{8}. 

\begin{proposition}\label{p3} A pair $(X,Y)$ is $\vC$-injective, where $\vC$ is a class of metrizable pairs, provided $X$ carries the direct limit topology with respect a tower $X_1\subset X_2\subset\cdots$ of closed subsets of $X=\bigcup_{n=1}^\infty X_n$ such that for every $n\in\IN$ the following conditions are satisfied:
\begin{itemize}
\item[\textup{(i)}] $X_n$ is a metrizable absolute retract;
\item[\textup{(ii)}] $X_n$ is a $Z$-set in $X_{n+1}$; 
\item[\textup{(iii)}] $(X_n,X_n\cap Y)\in\vC$; 
\item[\textup{(iv)}] the pair $(X_n,X_n\cap Y)$ is strongly $\vC$-universal.
\end{itemize}
\end{proposition}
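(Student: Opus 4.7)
The plan is to verify both clauses in the definition of $\vC$-injectivity for the given tower. The first clause --- that $\bigcup_n X_n=X$ and $(X_n,X_n\cap Y)\in\vC$ for every $n$ --- is immediate from the hypothesis together with condition (iii), so all the real work lies in the extension clause. Thus fix $n\in\IN$, a pair $(K,C)\in\vC$, a closed subset $B\subset K$, and a closed embedding $f:B\to X_n$ with $f^{-1}(Y)=B\cap C$; my goal is to produce a closed embedding $\bar f:K\to X$ extending $f$ with $\bar f^{-1}(Y)=C$ and $\bar f(K)\subset X_m$ for some $m\ge n$. I will take $m=n+1$.

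The construction proceeds in two stages. First, since $K$ is metrizable (because $\vC$ is a class of metrizable pairs), $B$ is closed in $K$, and $X_{n+1}$ is a metrizable absolute retract by (i), I extend $f$, viewed as a map into $X_{n+1}$, to a continuous map $g:K\to X_{n+1}$. Second, I upgrade $g$ to a $Z$-embedding with the correct preimage of $Y$ by invoking the strong $\vC$-universality of the pair $(X_{n+1},X_{n+1}\cap Y)$ from (iv). The hypothesis of strong universality requires $g|_B=f$ to be a $Z$-embedding of $B$ into $X_{n+1}$ with $(g|_B)^{-1}(X_{n+1}\cap Y)=B\cap C$; the latter is immediate from $f^{-1}(Y)=B\cap C$, while the former follows from (ii) combined with the standard fact that a closed subset of a $Z$-set is itself a $Z$-set, applied to $f(B)\subset X_n\subset X_{n+1}$. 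Strong $(K,C)$-universality, applied with the trivial open cover $\{X_{n+1}\}$, then produces the desired $Z$-embedding $\bar f:K\to X_{n+1}$ with $\bar f|_B=f$ and $\bar f^{-1}(X_{n+1}\cap Y)=C$; since $\bar f$ lands in $X_{n+1}$, this last equation gives $\bar f^{-1}(Y)=C$.

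To upgrade closedness in $X_{n+1}$ to closedness in $X$, I note that $X_{n+1}$ is a closed subset of $X$ by hypothesis, so $\bar f(K)$, being closed in $X_{n+1}$, is also closed in $X$, making $\bar f:K\to X$ the sought-after closed embedding. The main conceptual point --- and the one I would write out most carefully --- is the verification that $f|_B$ is a $Z$-embedding into $X_{n+1}$, which rests on the $Z$-set assumption (ii) and the elementary inheritance of the $Z$-set property by closed subsets. Everything else is routine bookkeeping around the direct-limit topology, the universality definition, and the observation that the definition of $\vC$-injectivity imposes no approximation requirement, so an arbitrary open cover (here the trivial one) suffices when invoking strong universality.
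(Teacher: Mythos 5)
Your proposal is correct and follows essentially the same route as the paper's own proof: extend $f$ to $g:K\to X_{n+1}$ via the absolute retract property (i), observe via (ii) that $f(B)$ is a $Z$-set in $X_{n+1}$, and then apply the strong $\vC$-universality (iv) of $(X_{n+1},X_{n+1}\cap Y)$ to obtain the required $Z$-embedding $\bar f$ with $\bar f^{-1}(Y)=C$. The extra details you supply (closed subsets of $Z$-sets are $Z$-sets, the trivial cover, closedness of $\bar f(K)$ in $X$) are exactly the points the paper leaves implicit.
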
 

\begin{proof} Clearly, the first condition of the definition of $\vC$-injectivity is satisfied. To verify the second one, fix $n\in\IN$, a pair $(K,C)\in\vC$, a closed subset $B\subset K$, and a closed embedding $f:B\to X_n$ with $f^{-1}(Y)=B\cap C$. By the condition (ii), $f(B)$ is a $Z$-set in $X_{n+1}$. Since the space $X_{n+1}$ is an absolute retract, the map $f$ extends to a map $g:K\to X_{n+1}$. Using the strong $\vC$-universality of the pair $(X_{n+1},X_{n+1}\cap Y)$ find a $Z$-embedding $\bar f:K\to X_{n+1}$ extending the embedding $f=g{\restriction}B$ and such that $\bar f^{-1}(Y)=C$.
\end{proof}

A particular case of Proposition~\ref{p3} is the following

\begin{proposition}\label{p4} A space $X$ is $\C$-injective, where $\C$ is a class of spaces, provided $X$ carries the direct limit topology with respect to a tower $X_1\subset X_2\subset \cdots$ of closed subspaces of $X=\bigcup_{n=1}^\infty X_n$ such that for every $n\in\IN$ the space $X_n\in\C$ is a metrizable strongly $\C$-universal absolute retract which is a $Z$-set in $X_{n+1}$.
\end{proposition}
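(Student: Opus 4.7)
The plan is to obtain Proposition~\ref{p4} as an immediate specialization of Proposition~\ref{p3}. Set $Y := X$ and let $\vC := \{(C,C) : C\in\C\}$ be the class of ``diagonal'' pairs associated with $\C$. By the very definition of $\C$-injectivity, the conclusion we want is exactly that the pair $(X,X)$ is $\vC$-injective, so it suffices to verify the four hypotheses (i)--(iv) of Proposition~\ref{p3} for the given tower $X_1\subset X_2\subset\cdots$ with respect to $\vC$.

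Conditions (i) (metrizable absolute retract) and (ii) ($Z$-set inclusion $X_n\subset X_{n+1}$) are restated verbatim among the hypotheses of Proposition~\ref{p4}. For (iii), since $Y=X$ we have $X_n\cap Y = X_n$, so $(X_n,X_n\cap Y) = (X_n,X_n)$, and this pair lies in $\vC$ precisely because $X_n\in\C$. For (iv), unwinding the paper's definition shows that strong $\vC$-universality of the pair $(X_n,X_n)$ amounts to strong $(C,C)$-universality for every $(C,C)\in\vC$, i.e.\ for every $C\in\C$; by the definition of strong $\C$-universality of a space, this is exactly what the hypothesis of Proposition~\ref{p4} asserts about $X_n$. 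With (i)--(iv) in hand, Proposition~\ref{p3} yields $\vC$-injectivity of $(X,X)$, which by definition is the $\C$-injectivity of $X$.

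I expect no substantive obstacle: the argument is pure bookkeeping, reducing Proposition~\ref{p4} to its pair-theoretic counterpart via the functorial assignment $C\mapsto(C,C)$. The one point requiring a word of care is that Proposition~\ref{p3} is formulated for classes of \emph{metrizable} pairs, so one implicitly restricts to classes $\C$ consisting of metrizable spaces; this is the standing convention throughout this framework, consistent with the fact that strong universality and $Z$-set arguments are tailored to the metrizable setting.
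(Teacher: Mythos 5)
Your proposal is correct and matches the paper exactly: the paper gives no separate proof of Proposition~\ref{p4}, introducing it only as ``a particular case of Proposition~\ref{p3}'', which is precisely the specialization $Y=X$, $\vC=\{(C,C):C\in\C\}$ that you carry out. Your remark about the implicit metrizability restriction on $\C$ is a fair and worthwhile observation, consistent with the paper's intended applications.
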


\begin{proposition}\label{p5} If a pair $(X,Y)\in\vC$ is strongly $\vC$-universal for some $[0,1]$-stable class $\vC$ of pairs and the space $X$ is a metrizable compact absolute retract, then the pair $(X\times\IR^\infty,Y\times\IR^\infty)$ is $\vC$-injective.
\end{proposition}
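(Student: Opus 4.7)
The plan is to apply Proposition~\ref{p3} to the tower $X_n:=X\times[-n,n]^n$ sitting inside $X\times\IR^\infty$, where $[-n,n]^n$ is embedded into $\IR^\infty$ using the first $n$ vectors of a fixed Hamel basis. Standard properties of the strongest locally convex topology give $\IR^\infty=\varinjlim[-n,n]^n$, and since $X$ is compact (hence locally compact) the product $X\times\IR^\infty$ coincides topologically with $\varinjlim X_n$, while $\bigcup_n X_n=X\times\IR^\infty$.

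Conditions (i)--(iii) of Proposition~\ref{p3} are routine. Each $X_n$ is a compact metrizable absolute retract as a product of two such spaces. The embedding $X_n\hookrightarrow X_{n+1}$ realizes $X_n$ as $X\times[-n,n]^n\times\{0\}\subset X_{n+1}$, which is a $Z$-set since any map into $X_{n+1}$ can be perturbed off $X_n$ by a small shift in the last coordinate. Iterated $[0,1]$-stability applied to $(X,Y)\in\vC$ yields $(X\times[0,1]^n,Y\times[0,1]^n)\in\vC$, homeomorphic to $(X_n,X_n\cap(Y\times\IR^\infty))$.

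The main obstacle is condition (iv): the strong $\vC$-universality of the product pair $(X\times[-n,n]^n,Y\times[-n,n]^n)$, which must be derived from the corresponding property of $(X,Y)$. Given $(K,C)\in\vC$, a closed $B\subset K$, and a map $f=(f_X,f_M):K\to X\times[-n,n]^n$ with $f|_B$ a $Z$-embedding and $(f|_B)^{-1}(Y\times[-n,n]^n)=B\cap C$, the plan is as follows: by iterated $[0,1]$-stability $(K\times[-n,n]^n,C\times[-n,n]^n)\in\vC$; let $\gamma:K\to K\times[-n,n]^n$, $\gamma(k):=(k,f_M(k))$, be the graph map, and let $g:K\times[-n,n]^n\to X$, $g(k,m):=f_X(k)$, be the trivial extension so that $g\circ\gamma=f_X$. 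Applying the strong $\vC$-universality of $(X,Y)$ should then produce a $Z$-embedding $h:K\times[-n,n]^n\to X$ close to $g$, agreeing with $f_X$ on $\gamma(B)$ and satisfying $h^{-1}(Y)=C\times[-n,n]^n$. Finally, setting $\tilde f(k):=(h(k,f_M(k)),f_M(k))$ gives a $Z$-embedding of $K$ into $X_n$ extending $f|_B$ with $\tilde f^{-1}(Y\times[-n,n]^n)=C$; its image lies in the $Z$-set $h(K\times[-n,n]^n)\times[-n,n]^n$ of $X_n$, hence is itself a $Z$-set.

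The delicate point is that $f_X|_B$ itself need not be a $Z$-embedding into $X$ (even though $f|_B$ is a $Z$-embedding into the product), so strong $\vC$-universality of $(X,Y)$ must be invoked on the ambient extension $g$ over the pair $(K\times[-n,n]^n,C\times[-n,n]^n)$ rather than on $f_X$ over $(B,B\cap C)$ directly; bridging this gap cleanly requires the standard strong-universality machinery from \cite{11}, \cite{9}, \cite{8}. Once (iv) is established, Proposition~\ref{p3} immediately delivers the $\vC$-injectivity of $(X\times\IR^\infty,Y\times\IR^\infty)$.
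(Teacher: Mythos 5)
Your proposal follows the paper's argument almost verbatim: identify $\IR^\infty$ with $\varinjlim I^n$, use compactness of $X$ to conclude that $X\times\IR^\infty$ carries the direct limit topology of the tower $X\times I^n$, and verify conditions (i)--(iv) of Proposition~\ref{p3}, with (i)--(iii) handled exactly as in the paper. The only divergence is your attempted inline derivation of condition (iv) via the graph map, and that sketch fails for precisely the reason you flag yourself: $g{\restriction}\gamma(B)$, i.e.\ essentially $f_X{\restriction}B$, need not be injective (two points of $B$ may share the same $X$-coordinate and differ only in the $I^n$-coordinate), so the hypothesis of strong $\vC$-universality of $(X,Y)$ is simply not available for the map $g$ relative to $\gamma(B)$, and no requirement that $h$ agree with $g$ on $\gamma(B)$ can be imposed. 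The paper does not attempt this reduction at all: it just invokes the known fact that strong $\vC$-universality of $(X,Y)$ passes to $(X\times I^n,Y\times I^n)$, citing \cite[10.5]{2}, \cite[4.4]{13} and \cite[1.13]{8} --- which is exactly your stated fallback, so you should cite that lemma directly and drop the graph-map construction.
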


\begin{proof} It follows from the topological characterization of the space $\IR^\infty$ \cite{24} that $\IR^\infty$ is homeomorphic to the direct limit $I^\infty=\varinjlim I^n$ of the tower
$$I^1\subset I^2\subset I^3\subset\cdots
$$
where $I=[0,1]$ and each $I^n$ is identified with the subset $I^n\times\{0\}$ in $I^{n+1}$.

Since the space $X$ is compact, the product $X\times\IR^\infty$ carries the direct limit topology $\varinjlim X\times I^n$ with respect to the tower $X\times I\subset X\times I^2\subset X\times I^3\subset\cdots$. To show that this tower satisfies the conditions of Proposition~\ref{p3}, fix arbitrary $n\in\IN$. Since $X$ is an absolute retract, so is the product $X\times I^n$. Because the class $\vC$ is $[0,1]$-stable, $(X\times I^n,Y\times I^n)\in\vC$. Evidently, $I^n=I^n\times\{0\}$ is a $Z$-set in $I^{n+1}$. This implies that $X\times I^n$ is a $Z$-set in $X\times I^{n+1}$. Finally, the strongly $\vC$-university of the pair $(X,Y)$ implies the strongly $\vC$-university of the pair $(X\times I^n,Y\times I^n)$, see \cite[10.5]{2}, \cite[4.4]{13} or \cite[1.13]{8}.
\end{proof}

Next, we use some facts concerning the strong universality in convex sets. A subset $A$ of a linear space is called {\em symmetric} if $A=-A$. A class $\C$ of spaces is defined to be {\em local\/} if a separable metrizable space $X$ belongs to the class $\C$ provided each point of $X$ has a neighborhood belonging to the class $\C$.

In the following theorem we unify Main Theorem of \cite{4}, Proposition 1.5 of \cite{8}, and Proposition 4.3 of \cite[p.158]{10}.

\begin{theorem}\label{t:B} Suppose $X$ is a linear subspace of a locally convex linear topological space $L$ and $C$ is a closed convex symmetric infinite-dimensional subset in $X$ such that the closure $\bar C$ of $C$ in $L$ is compact and metrizable. Then 
\begin{itemize}
\item[\textup{(i)}] the pair $(\bar C,C)$ and the space $C$ are strongly universal;
\item[\textup{(ii)}] the class $\F_0(C)$ is local provided $C\ne\bar C$;
\item[\textup{(iii)}] $t\cdot C$ is a $Z$-set in $C$ for every $t\in[0,1)$.
\end{itemize}
\end{theorem}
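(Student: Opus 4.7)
The plan is to derive each of (i), (ii), (iii) from the three results that the theorem explicitly unifies: (iii) from Proposition~4.3 of \cite{10}, (i) from the Main Theorem of \cite{4}, and (ii) from Proposition~1.5 of \cite{8}. Under the stated hypothesis --- $C\subset X\subset L$ closed, convex, symmetric, infinite-dimensional, with compact metrizable closure $\bar C$ in the locally convex space $L$ --- each cited result applies, so the proof is largely a matter of checking that the hypotheses match and then transcribing conclusions.

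For (iii), the key tool is the family of homotheties $h_s:C\to C$, $h_s(x)=sx$, which are well-defined since $C$ is convex and symmetric (in particular $0\in C$). Given $t\in[0,1)$, a map $f:[0,1]^n\to C$, and an open cover $\U$ of $C$, compactness of $\bar C$ together with joint continuity of scalar multiplication yield some $s\in(t,1)$ for which $h_s\circ f$ is $\U$-close to $f$. The image $h_s(f([0,1]^n))$ lies in $sC$, and a further small perturbation of the form $x\mapsto (1-\epsilon)sf(x)+\epsilon v$, with $v\in C$ chosen outside the finite-dimensional affine hull of $sf([0,1]^n)$ --- available by infinite-dimensionality of $C$ --- generically has image disjoint from $tC$ while remaining $\U$-close to $f$.

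For (i), the strong universality of $(\bar C,C)$ and of $C$ rests on (iii) together with the fact that $\bar C$ is a metrizable absolute retract, being a metrizable compact convex subset of a locally convex space. Given a pair $(K,D)\in\F_0(\bar C,C)$, a closed $B\subset K$, and a partial $Z$-embedding $f:B\to\bar C$ with $f^{-1}(C)=B\cap D$, one extends $f$ via the AR property to a map $g:K\to\bar C$ and then corrects $g$ into a $Z$-embedding $\tilde f$ satisfying $\tilde f^{-1}(C)=D$. The main obstacle I expect is preserving the exact preimage condition $\tilde f^{-1}(C)=D$ under approximation; the construction in \cite{4} circumvents this by an inductive two-sided correction on a tower of compacta in $K$, alternately pushing points of $D$ into $C$ and points of $K\setminus D$ into $\bar C\setminus C$, at each stage invoking (iii) to guarantee the $Z$-embedding property. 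For (ii), the global assumption $C\ne\bar C$ of Proposition~1.5 of \cite{8} supplies a supply of points in $\bar C\setminus C$, and a local-to-global partition-of-unity construction in $\bar C$ assembles local closed embeddings into a single global one witnessing $\F_0(C)\in\F_0(C)$.
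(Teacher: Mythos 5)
The paper gives no proof of this theorem at all: it is stated as a unification of the Main Theorem of \cite{4}, Proposition 1.5 of \cite{8}, and Proposition 4.3 of \cite[p.158]{10} --- precisely the three results you invoke and match to parts (i), (ii), (iii) --- so at the level the paper operates, your argument coincides with the paper's. The only point worth flagging is in your supplementary sketch of (iii): the affine hull of $f([0,1]^n)$ need not be finite-dimensional (a continuous image of a cube in an infinite-dimensional space can pass through infinitely many affinely independent points), so the step ``choose $v\in C$ outside the finite-dimensional affine hull of $sf([0,1]^n)$'' does not work as written; this is a defect only of your heuristic reconstruction of the cited Proposition 4.3 of \cite{10}, not of the citation-based proof itself.
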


\begin{proposition}\label{p6} For every normed space $X$ with separable dual the pair $(X^{**}_c,X_c)$ is $\W(X^{**},X)$-injective.
\end{proposition}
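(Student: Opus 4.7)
The plan is to apply Proposition~\ref{p3} to the tower $X_n:=nB^{**}$, $n\in\IN$, with the class $\vC:=\W(X^{**},X)$. By Proposition~\ref{p2}, the space $X^{**}_c$ already carries the direct limit topology $\varinjlim nB^{**}$, so the task reduces to verifying the four conditions of Proposition~\ref{p3} for every $n$.

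Condition (iii) is immediate from the definition of $\W(X^{**},X)$, since $nB^{**}$ is $*$-weakly compact and $nB^{**}\cap X=nB$. For condition (i), separability of $X^*$ makes $nB^{**}$ compact metrizable, and as a convex compact subset of the locally convex space $X^{**}$ under the $*$-weak topology it is an absolute retract by Dugundji's theorem. To obtain condition (ii), I would apply Theorem~\ref{t:B}(iii) to the set $(n+1)B^{**}$, taking both the ``$X$'' and the ``$L$'' of that theorem to be $X^{**}$ under the $*$-weak topology; the set $(n+1)B^{**}$ is closed, convex, symmetric, compact, metrizable, and (assuming $\dim X=\infty$; the finite-dimensional case is trivial) infinite-dimensional. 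The scalar $t:=n/(n+1)\in[0,1)$ then delivers the desired conclusion that $nB^{**}=t\cdot(n+1)B^{**}$ is a $Z$-set in $(n+1)B^{**}$.

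The main obstacle is condition (iv): the strong $\W(X^{**},X)$-universality of $(nB^{**},nB)$. Theorem~\ref{t:B}(i), applied with $L=X^{**}$ under the $*$-weak topology, with the ``$X$'' of that theorem being the normed space $X$ itself and $C=nB$ (whose closure in $L$ equals $nB^{**}$ by Goldstine's theorem), yields only the strong $\F_0(nB^{**},nB)$-universality of the pair $(nB^{**},nB)$, and the class $\F_0(nB^{**},nB)$ is a priori smaller than $\W(X^{**},X)$. I would close this gap by a scaling argument: any $(K,C)\in\W(X^{**},X)$ is realised as $(K,K\cap X)$ for some $*$-weakly compact $K\subset X^{**}$, and such $K$ is automatically norm bounded, say $K\subset mB^{**}$. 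The linear map $x\mapsto (n/m)x$ is a homeomorphism of $X^{**}_c$ that preserves the subspace $X$ setwise and carries $(K,K\cap X)$ onto a pair lying in $\F_0(nB^{**},nB)$. Since strong $(K,C)$-universality is an invariant of the homeomorphism type of $(K,C)$, the strong $\F_0(nB^{**},nB)$-universality given by Theorem~\ref{t:B}(i) upgrades to the required strong $\W(X^{**},X)$-universality.

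Once conditions (i)--(iv) are verified, Proposition~\ref{p3} yields the $\W(X^{**},X)$-injectivity of the pair $(X^{**}_c,X_c)$.
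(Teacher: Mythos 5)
Your proposal is correct and follows essentially the same route as the paper: both verify the hypotheses of Proposition~\ref{p3} for the tower $nB^{**}$ via Proposition~\ref{p2}, the Dugundji theorem, and Theorem~\ref{t:B}. The only difference is cosmetic --- the paper compresses your scaling argument for condition (iv) into the one-line assertion that $\W(X^{**},X)=\F_0(B^{**},B)$ ``follows from the definition,'' which you spell out explicitly.
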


\begin{proof} By Proposition 2, the space $X^{**}_c$ carries the topology of the direct limit $\varinjlim nB^{**}$ of the tower $B^{**}\subset 2B^{**}\subset\cdots$, where $B^{**}$ stands for the closed unit ball of the second dual space $X^{**}$ endowed with the $*$-weak topology. It is well-known that the space $B^{**}$ is compact and metrizable (since the dual Banach space $X^{*}$ is separable). Consequently, the space $B^{**}$, being a metrizable convex set in a locally convex space, is an absolute retract according to the Dugundji Theorem \cite[II.\S3]{10}. It follows from the definition of the class $\W(X^{**},X)$ that $\W(X^{**},X)=\F_0(B^{**},B)$. By Theorem~\ref{t:B}, for every $n$ the pair $(nB^{**},nB)$ is strongly $\W(X^{**},X)$-universal and $nB^{**}$ is a $Z$-set in $(n+1)B^{**}$. Thus it is legal to apply Proposition~\ref{p3} to conclude that the pair $(X^{**}_c,X_c)$ is $\W(X^{**},X)$-universal.
\end{proof}

For a normed space $X$ denote by $(X,s)$ the space $X$ endowed with the strongest topology coinciding with the weak topology on bounded subsets of $X$. It is clear that $(X,s)$ is nothing else but the direct limit $\varinjlim nB$ of the tower $B\subset 2B\subset\cdots$, where $B$ is the weak unit ball of $X$. It is clear that the topology $s$ is $(X,s)$ is sequentially weak. Unlike to the spaces $(X,\weak)$ and $(X,c)$, for every normed space $X$ with separable dual, the space $(X,s)$ is sequential.

\begin{proposition}\label{p7} For every infinite-dimensional normed space $X$ with separable dual the space $(X,s)$ is $\W(X)$-injective,
\end{proposition}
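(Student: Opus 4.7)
The plan is to apply Proposition~\ref{p4} to the natural tower $X_n := nB$, $n\in\IN$, where $B$ is the weak unit ball of $X$. By the remark preceding this proposition, $(X,s)$ is exactly the direct limit $\varinjlim nB$, so it remains to verify, for every $n$, the four hypotheses of Proposition~\ref{p4}: $nB\in\W(X)$; $nB$ is a metrizable absolute retract; $nB$ is strongly $\W(X)$-universal; and $nB$ is a $Z$-set in $(n+1)B$.

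The first three conditions are routine. The membership $nB\in\W(X)$ is immediate from the definition of $\W(X)$ as the class of spaces homeomorphic to closed bounded subsets of $(X,\weak)$. Metrizability of $nB$ follows from the separability of $X^*$: the $*$-weak ball $nB^{**}$ is compact and metrizable, and $nB$ sits inside it as a subspace. The absolute retract property follows from Dugundji's theorem applied to the metrizable convex set $nB$ in the locally convex space $(X,\weak)$, exactly as in the proof of Proposition~\ref{p6}. For strong universality and the $Z$-set condition, the idea is to invoke Theorem~\ref{t:B} with ambient space $L := X^{**}_c$ and $C := nB$: the set $C$ is closed, convex, symmetric, and (since $X$ is infinite-dimensional) infinite-dimensional, while its closure in $L$ is the $*$-weakly compact metrizable ball $nB^{**}$. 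Part~(iii) of Theorem~\ref{t:B}, applied with $t=n/(n+1)$ inside $C=(n+1)B$, gives that $nB$ is a $Z$-set in $(n+1)B$, and Part~(i) gives that the space $nB$ is strongly universal.

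The only step requiring a small extra argument, and the one I would regard as the main (modest) obstacle, is the upgrade of ``strongly universal'' to ``strongly $\W(X)$-universal'', which amounts to verifying the equality $\F_0(nB)=\W(X)$. The inclusion $\F_0(nB)\subset\W(X)$ is obvious, since every closed subset of $nB$ is a closed bounded subset of $(X,\weak)$. For the reverse inclusion, any $A\in\W(X)$ is homeomorphic to a closed bounded set $A'\subset(X,\weak)$ contained in some $mB$; the linear rescaling $x\mapsto (n/m)x$ is a weak self-homeomorphism of $X$ carrying $mB$ onto $nB$, so $A'$ is homeomorphic to a closed subset of $nB$. With $\F_0(nB)=\W(X)$ in hand, the four hypotheses of Proposition~\ref{p4} are all satisfied, and Proposition~\ref{p4} yields the $\W(X)$-injectivity of $(X,s)$.
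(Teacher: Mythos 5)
Your proposal is correct and follows essentially the same route as the paper: both apply Proposition~\ref{p4} to the tower $nB$, obtain the absolute retract property from Dugundji's theorem, and get strong $\W(X)$-universality and the $Z$-set condition from Theorem~\ref{t:B}(i),(iii) together with the identification $\W(X)=\F_0(B)$. Your explicit rescaling argument for $\F_0(nB)=\W(X)$ merely spells out a step the paper takes for granted.
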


\begin{proof} It follows from the definition of $(X,s)$ that $(X,s)=\varinjlim nB$. For every $n$ the space $nB$ can be considered as a dense convex subset of the $*$-weak ball $nB^{**}$ which is known to be a metrizable compact convex subset of the second dual space $X^{**}$ endowed with the $*$-weak topology, see Proposition 62 and Theorem 64 in \cite{19}.

Clearly, each ball $nB$ belongs to the class $\W(X)$. Next, $nB$, being a metrizable convex subset of a locally convex space, is a metrizable absolute retract. By Theorem~\ref{t:B}(iii), $nB$ is a $Z$-set in $(n+1)B$ for every $n$. Since $\W(X)=\F_0(B)$, Theorem~\ref{t:B}(i) implies that each space $nB$ is stronghly $\W(X)$-universal. Then by Proposition~\ref{p4}, the space $X_s$ is $\W(X)$-injective. 
\end{proof}
\smallskip

{\bf Proof of Theorem 1.} Let $X,Y$ be normed spaces with separable duals. Without loss of generality, the spaces $X,Y$ are infinite-dimensional.

To prove the ``if part'' of Theorem~\ref{t1}, suppose that $\W(X)=\W(Y)$. Since the identity map $(X,s)\to(X,\weak)$ and $(Y,s)\to (Y,\weak)$ are sequential homeomorphisms, to prove that the spaces $(X,\weak)$ and $(Y,\weak)$ are sequentially homeomorphic, it suffices to verify the topological equivalence of the spaces $(X,s)$ and $(Y,s)$. This easily follows from the Uniqueness Theorem~\ref{t:A} and Proposition~\ref{p7}. 

Next, assume that the spaces $(X,\weak)$ and $(Y,\weak)$ are sequentially homeomorphic. Then the spaces $(X,s)$ and $(Y,s)$ are sequentially homeomorphic too. Since these spaces are sequential, they are homeomorphic.

To show that $\W(X)=\W(Y)$, fix any space $A\in\W(X)$. Since each bounded subset of $(X,\weak)$ is metrizable and separable, see Proposition 62 of \cite{19}, we get that $A$ is a separable metrizable space. We may assume that $A$ is a closed subspace of $(X,s)$. Since the space $(Y,s)$ is homeomorphic to $(X,s)$, $A$ is homeomorphic to a closed subset $A'$ of the space $(Y,s)$. To show that $A\in\W(Y)$ it suffices to verify that $A'\in\W(Y)$.

We consider separately two cases:
\smallskip

1) The space $Y$ is reflexive. Then each space from the class $\W(Y)=\W(X)$ is compact. Consequently, the spaces $A$ and $A'$ are compact too and thus $A'\subset nB_Y$ for some $n$, where $B_Y$ stands for the wek unit ball of the space $Y$. Consequently, $A'\in\F_0(B_Y)=\W(Y)$. 
\smallskip

2) The space $Y$ is not reflexive. In this case $B_Y\ne B_Y^{**}$, where $B_Y^{**}$ stands for the unit ball of $Y^{**}$ endowed with the $*$-weak topology. By Theorem~\ref{t:B}(ii), the class $\F_0(B_Y)=\W(Y)$ is local. Hence, to show that $A'\in\W(Y)$, it suffices to verify that each point $a\in A"$ has a neighborhood $U\in\W(Y)$. Since the space $A'$ is first-countable, the point $a\in A"$ has a closed neighborhood $U\subset A'$ lying in $nB_Y$ for some $n\in\IN$. Then $U\in\F_0(B_Y)=\W(Y)$ and consequently, $A'\in W(Y)$.

Thus $\W(X)\subset \W(Y)$. By analogy we may prove that $\W(Y)\subset\W(X)$.
\smallskip

{\bf Proof of Theorem~\ref{t3}.} Let $X,Y$ be normed spaces with separable duals. If $\W(X^{**},X)=\W(Y^{**},Y)$, then the Uniqueness Theorem~\ref{t:A} and Proposition~\ref{p6} imply that the pairs $(X^{**}_c,X_c)$ and $(Y^{**}_c,Y_c)$ are homeomorphic.

To see that the homeomorphness of the pairs   $(X^{**}_c,X_c)$ and $(Y^{**}_c,Y_c)$ implies the equality $\W(X^{**},X)=\W(Y^{**},Y)$, observe that for a normed space $Z$ the class $\W(Z^{**},Z)$ coincides with the class of pairs homeomorphic to the pairs $(K,K\cap Z)$ where $K$ is a compact subset of $Z^{**}_c$ (this follows from the fact that the dual topology of compact convergence on $Z^{**}$ is the strongest topology inducing the $*$-weak topology on each bounded subset of $Z^{**}$, see \cite[IV.6.3]{25}).
\smallskip

{\bf Proof of Theorem~\ref{t4}.} Let $X$ be a normed space with separable dual and suppose that the class $\W(X^{**},X)$ is $[0,1]$-stable. Let $B$ denote the weak unit ball of $X$ and $B^{**}$ the $*$-weak unit ball of the second dual space $X^{**}$. According to the Uniqueness Theorem~\ref{t:A} and Proposition~\ref{p6}, to prove the homeomorphness of the pairs $(X^{**}_c,X_c)$ and $(B^{**}\times\IR^\infty,B\times \IR^\infty)$, it suffices to verify that the latter pair is $\W(X^{**},X)$-injective, But this follows from Proposition~\ref{p5}, Theorem~\ref{t:B}(i) and the evident equality $\W(X^{**},X)=\F_0(B^{**},B)$.
\smallskip

{\bf Proof of Theorem~\ref{t2}.} If for Banach spaces $X,Y$ with separable duals the spaces $(X,c)$ and $(Y,c)$ are homeomorphic, then the spaces $(X,\weak)$ and $(X,\weak)$ are sequentially homeomorphic and by Theorem~\ref{t1}, $\W(X)=\W(Y)$.

Next, assume that $\W(X)=\W(Y)$ for Banach spaces  $X,Y$ with separable duals. The spaces $X,Y$ are separable (see Proposition 51 in \cite{19}) and thus admit equivalent Kadec norms, see Theorem 111 in \cite{19}. We recall that a norm of a Banach space is {\em Kadec} if the weak and norm topologies coincide on the unit sphere. So, without loss of generality we may assume that the norms of the spaces $X,Y$ are Kadec. According to Theorem 1.14 in \cite{7}, the equality $\W(X)=\W(Y)$ implies the homeomorphness of the weak unit balls $B_X$ and $B_Y$ of the spaces $X,Y$. This implies the homeomorphness of the products $B_X\times\IR^\infty$ and $B_Y\times\IR^\infty$. Now Corollary~\ref{c2} implies the homeomorphness of the spaces $(X,c)$ and $(Y,c)$, provided the Banach spaces $X,Y$ are isomorphic to their hyperplanes.
\smallskip

{\bf Proof of Proposition~\ref{p1}.} Taking into account that the topology of compact convergence is stronger than the weak topology and coincides with it on bounded sets, we conclude that the identity map $(X,c)\to(X,\weak)$ is a sequential homeomorphism.

To show that for any infinite-dimensional normed space $X$ with separable dual the spaces $(X,c)$ and $(X,\weak)$ are not homeomorphic, we shall use suitable properties of stratifiable spaces, see \cite{18}. It is known \cite{12} that subspaces of spaces carrying the direct limit topology with respect to a tower of metrizable compacta are stratifiable. In particular, the space $X_c$ is stratifable. On the other hand, by Theorem 6a in \cite{17}, the weak topology of any infinite-dimensional normed space is not stratifiable.
\newpage

\end{document}